\newtheorem{theorem}{Theorem}
\newtheorem{lemma}{Lemma}
\newtheorem{assumption}{Assumption}
\newtheorem{corollary}{Corollary}
\begin{document}

\title{Structure-preserving non-linear PCA for matrices}

\author{Joni Virta, Andreas Artemiou\thanks{Joni Virta is with the Department of Mathematics and Statistics, University of Turku, Finland (email: joni.virta@utu.fi). Andreas Artemiou is with the Department of Information Technologies, University of Limassol, Cyprus (email: artemiou@uol.ac.cy).}
}



\maketitle

\begin{abstract}
We propose MNPCA, a novel non-linear generalization of (2D)$^2${PCA}, a classical linear method for the simultaneous dimension reduction of both rows and columns of a set of matrix-valued data. MNPCA is based on optimizing over separate non-linear mappings on the left and right singular spaces of the observations, essentially amounting to the decoupling of the two sides of the matrices. We develop a comprehensive theoretical framework for MNPCA by viewing it as an eigenproblem in reproducing kernel Hilbert spaces. We study the resulting estimators on both population and sample levels, deriving their convergence rates and formulating a coordinate representation to allow the method to be used in practice. Simulations and a real data example demonstrate MNPCA's good performance over its competitors.
\end{abstract}


\section{Introduction}

The diverse forms of data encountered in modern applications have caused a surge in the development of statistical methods specializing to datasets that do not exhibit the standard form of samples of points in $\mathbb{R}^p$. In this work, we focus on one of these special cases, matrix-valued data, where a sample $X_1, \ldots , X_n$ of $p_1 \times p_2$ matrices is observed. In typical applications, such as imaging, the dimensions $p_1, p_2$ can be very large in size and the first step of the analysis is often dimension reduction.

Perhaps the most well-known statistical dimension reduction technique for matrix-valued data is a generalization of the classical PCA known as (2D)$^2${PCA} \cite{zhang20052d}, where the observed matrices are replaced with $d_1 \times d_2$ latent matrices $Z_i := A' (X_i - \bar{X} ) B$ where $\bar{X}$ is the sample mean matrix and $A, B$ contain, respectively, any first $d_1$ and $d_2$ eigenvectors of the matrices
\begin{align*}
    \frac{1}{n}\sum_{i=1}^n X_i X_i' - \bar{X} \bar{X}' \quad \mbox{and} \quad \frac{1}{n}\sum_{i=1}^n X_i' X_i - \bar{X}' \bar{X}.
\end{align*}
This reduction can be seen to be natural in the following two senses. (a) It replaces a sample of matrices with a sample of matrices, essentially \textit{preserving the type of the data}. This is not the case for many dimension reduction methods (such as the kernel methodology listed below) which instead produces a vector of scores whose relation to the original matrix structure is not clear. (b) The latent matrices $Z_i$ exhibit a row-column dependency structure where two elements of $Z_i$ that share a row (column) also share the same projecting column of $A$ ($B$). These properties let (2D)$^2${PCA} properly leverage the structure of the observed matrices and see them as more than simply collections of elements. (2D)$^2${PCA} has been used to great success in various applications, such as face recognition \cite{zhang20052d} and stock price prediction \cite{gao2016deep}.

In this work we propose MNPCA (short for ``matrix non-linear PCA''), a non-linear extension of (2D)$^2${PCA} that retains both of the properties listed in the previous paragraph. That is, we construct a non-linear mapping $X_i \mapsto g(X_i) =: Z_i$ such that (a) $Z_1, \ldots , Z_n$ are $d_1 \times d_2$ matrices, (b) $g$ imposes specific dependencies between the rows and between the columns of the images $Z_i$ (i.e., two elements sharing a row are more similar than elements on different rows), and (c) when a linear kernel is used, the mapping $g$ reduces to the linear (2D)$^2${PCA}.  Analogously to (2D)$^2${PCA}, our non-linear mapping $g$ can thus be seen to preserve the matrix structure of the original data. This behavior of $g$ is in strict contrast to existing methods of non-linear dimension reduction of matrix-valued data which we review next. All of the methods listed below are based either on (2D)$^2${PCA} \cite{zhang20052d} or its one-sided precursor 2DPCA \cite{yang2004two} which applies only the transformation $A$ or $B$ but not both.

The authors in \cite{kong2005generalized} defined kernel 2DPCA (K2DPCA), which is essentially equivalent to applying the standard kernel PCA to the set of all $n p_1$ rows in the matrix sample. Independently, \cite{nhat2007kernel} proposed a method equivalent to K2DPCA of \cite{kong2005generalized}. In \cite{zhang2006recognizing} K2DPCA was applied separately to the rows and columns of the input matrices and standard PCA was used on the resulting pairs of latent representations to obtain combined latent variables. Whereas, \cite{yu2009k2dpca} first used K2DPCA to reduce the number of rows in the data and then applied regular 2DPCA to the obtained latent matrices to reduce also their column dimension. To summarize, the literature on non-linear extensions of two-dimensional PCA either focuses on one-sided methods (extensions of 2DPCA) or combines a pair of one-sided methods into a two-sided method (i.e., one that reduces both rows and columns simultaneously) in a theoretically cumbersome way. Both options can be seen as sub-optimal: If the number of columns in the data is even moderately large, reducing only the number of rows still leaves the data dimension high, making, e.g., the visualization of the resulting components impossible. Whereas, in chaining the row and column reductions, the outcome either depends non-trivially on the order in which the rows and columns are reduced \cite{yu2009k2dpca} or is artificial and loses the structural connection to the original matrices \cite{zhang2006recognizing}.




A further problem underlying the methods listed above is their high computational complexity. Namely, as K2DPCA operates on the sample of all $np_1$ rows of the data, the size $np_1 \times np_1$ of the corresponding kernel matrix can be enormous even for combinations of a moderate sample size $n$ and number of rows $p_1$. To combat this, \cite{zhang2006recognizing} propose approximating the full kernel matrix with the kernel matrix of the within-observation row means of the data, but this leads to the loss of the row structure and it is not clear how it affects the accuracy of the method. 


Our proposed method for constructing the non-linear mapping $X_i \mapsto g(X_i)$ avoids the previous pitfalls by working not with the observed matrices $X_i$ themselves but with their singular value decompositions (SVD), $X_i = U_i D_i V_i'$. This simple change of perspective has two major implications:
\begin{enumerate}
    \item The singular value decomposition essentially ``decouples'' the row and column information in the input matrices, allowing the independent and simultaneous reduction of the row and column spaces, possibly with different kernel functions. As a consequence, MNPCA is fully two-sided and the order in which the two sides are reduced does not affect the outcome.
    \item As leading singular vectors capture the main directions of data variation, truncating the SVD allows us to leverage (almost) the full data information while simultaneously avoiding the inflation of the size of the kernel matrix.
\end{enumerate}

The primary contributions of this work are: (i) We formulate the population-level version of MNPCA, our proposed SVD-based non-linear extension of (2D)$^2${PCA}. As is typical in the literature on non-linear dimension reduction \cite{fukumizu2004dimensionality}, this requires casting the problem into the framework of reproducing kernel Hilbert spaces (RKHS) and Hilbert-Schmidt (H-S) operators. Special attention is paid to formulating the exact assumptions under which MNPCA is well-defined. (ii) We study the asymptotic properties of MNPCA and derive the convergence rate of the corresponding estimator. (iii) We derive a coordinate representation for MNPCA, allowing its sample-level implementation, and discuss the selection of its tuning parameters. (iv) We compare MNPCA to several of its competitors using both simulations and a real data example. Note that earlier work on the non-linear extensions of (2D)$^2${PCA} (see the list of references earlier) has focused solely on points (iii) and (iv), ignoring the finer theoretical aspects of the corresponding methods.


Finally, we briefly note that the structure-acknowledging non-linear dimension reduction of matrices can be approached also from another viewpoint besides (2D)$^2${PCA}. Namely, \cite{liu2013tensorial} apply standard kernel PCA to the sample $X_1, \ldots, X_n$ but with a very specific choice of kernel that recognizes the matrix structure of the data. While interesting, this approach goes somewhat against the spirit of kernel methodology where the kernel function is typically seen as a tuning parameter and its choice is equivalent to determining what kind of latent structures one is after. The limiting to a single kernel in \cite{liu2013tensorial} is in strict contrast to our proposed method which allows the use of any kernel function that is either odd or even, see the definition in Section \ref{sec:notation}. This restriction is a mild one as the classes of odd and even kernels are very large (in fact, every positive semi-definite kernel induces both a corresponding odd and an even kernel). 

The manuscript is organized as follows. Section \ref{sec:notation} begins with some definitions and notation. In Section \ref{sec:projections} we use the combination of SVD and even/odd kernels to motivate a well-defined non-linear analogy for the two-sided projection of a matrix. In Section \ref{sec:population} we formulate the population version of the MNPCA-procedure. Section \ref{sec:asymptotics} focuses on the asymptotic properties of the sample version of the method, whereas its coordinate representation is constructed in Section \ref{sec:coordinate}. Tuning parameter selection is discussed in Section \ref{sec:tuning}. Simulations and real data examples are given in Section \ref{sec:examples} and Section \ref{sec:discussion} concludes with a discussion. All proofs of technical results are collected to Appendix.

\section{Notation and definitions}\label{sec:notation}


As is standard, we use the word kernel to refer to any $\kappa: \mathbb{R}^{p} \times \mathbb{R}^p \to \mathbb{R}$ that is symmetric and positive semi-definite \cite{shawe2004kernel}. Let next $\kappa_1, \kappa_2$ be continuous kernels. We denote the RKHS induced by $\kappa_1$ and $\kappa_2$ by $(\mathcal{H}_1, \langle \cdot , \cdot \rangle_1 ), (\mathcal{H}_2, \langle \cdot , \cdot \rangle_2)$, respectively. We use the notation $\mathcal{B}(\mathcal{H}_i, \mathcal{H}_j)$ to refer to the set of all bounded linear operators from $\mathcal{H}_i$ to $\mathcal{H}_j$. By the continuity of the kernels $\kappa_1, \kappa_2$, the spaces $\mathcal{H}_1, \mathcal{H}_2$ are separable and admit orthonormal bases \cite{hein2004kernels}.

Recall that an operator $\mathcal{F} \in \mathcal{B}(\mathcal{H}_i, \mathcal{H}_j)$ is said to be a Hilbert-Schmidt operator if the quantity $\sum_{k=1}^\infty \sum_{\ell=1}^\infty \langle v_\ell, F u_k \rangle_j^2$ is finite for some orthonormal bases $\{ u_k \} $ and $ \{ v_\ell \} $ of $\mathcal{H}_i$ and $\mathcal{H}_j$, respectively, in which case its value does not depend on the choice of the bases and is termed the squared Hilbert-Schmidt norm $\| F \|^2_{ij}$ of $F$. The subscript $ij$ in the norm signifies that the domain of the norm consists of linear operators from $\mathcal{H}_i$ to $\mathcal{H}_j$. The vector space $\mathcal{S}(\mathcal{H}_i, \mathcal{H}_j)$ of all Hilbert-Schmidt operators in $\mathcal{B}(\mathcal{H}_i, \mathcal{H}_j)$ itself becomes a Hilbert space when endowed with the inner product
\begin{align*}
    \langle F_1, F_2 \rangle_{ij} := \sum_{k=1}^\infty \sum_{\ell=1}^\infty \langle v_\ell, F_1 u_k \rangle_j \langle v_\ell, F_2 u_k \rangle_j.
\end{align*}

For $f \in \mathcal{H}_i$ and $g \in \mathcal{H}_j$, the tensor product $g \otimes f$ denotes the element of $\mathcal{S}(\mathcal{H}_i, \mathcal{H}_j)$ acting as $(g \otimes f)h = \langle f, h \rangle_i g$ for any $h \in \mathcal{H}_i$. It is straightforwardly checked that $ \| g \otimes f \|_{ij} = \| f \|_i \| g \|_j $. Finally, we note that the Hilbert-Schmidt norms satisfy $ \langle F_1, F_2 \rangle_{ij} = \langle F_1^*, F_2^* \rangle_{ji} $ and $\langle g, F_1 f \rangle_j = \langle (g \otimes f), F_1 \rangle_{ij}$ for all $F_1, F_2 \in \mathcal{S}(\mathcal{H}_i, \mathcal{H}_j)$, $f \in \mathcal{H}_i$ and $g \in \mathcal{H}_j$.

A key role in our development is played by the so-called even and odd kernels, detailed in the following lemma, given originally as Corollary 1 in \cite{krejnik2012reproducing}.

\begin{lemma}\label{lem:odd_even_kernels}
Let $\kappa: \mathbb{R}^{p} \times \mathbb{R}^p \to \mathbb{R}$ be a kernel that satisfies $\kappa(x, y) = \kappa(-x, -y)$ for all $x, y \in \mathbb{R}^{p}$. Then,
\begin{itemize}
    \item[a)] The function $\kappa^-: \mathbb{R}^p \times \mathbb{R}^p \to \mathbb{R}$ acting as $(x, y) \mapsto \kappa(x, y) - \kappa(-x, y)$ is a kernel in $\mathbb{R}^p$ which is \textbf{odd} in the sense that $\kappa^-(-x, y) = - \kappa^-(x, y)$ for all $x, y \in \mathbb{R}^{p}$.
    \item[b)] The function $\kappa^+: \mathbb{R}^p \times \mathbb{R}^p \to \mathbb{R}$ acting as $(x, y) \mapsto \kappa(x, y) + \kappa(-x, y)$ is a kernel in $\mathbb{R}^p$ which is \textbf{even} in the sense that $\kappa^+(-x, y) = \kappa^+(x, y)$ for all $x, y \in \mathbb{R}^{p}$.
\end{itemize}
\end{lemma}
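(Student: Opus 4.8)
The plan is to treat the three ingredients of the statement separately: the parity relations, the symmetry of $\kappa^{\pm}$, and their positive semi-definiteness, with only the last requiring genuine thought. The parity claims follow by direct substitution from the standing hypothesis $\kappa(x,y) = \kappa(-x,-y)$. For part a) one computes $\kappa^-(-x,y) = \kappa(-x,y) - \kappa(x,y) = -\kappa^-(x,y)$, and for part b) symmetrically $\kappa^+(-x,y) = \kappa(-x,y) + \kappa(x,y) = \kappa^+(x,y)$. Next, to see that $\kappa^{\pm}$ are symmetric in their two arguments, I would first record the consequence $\kappa(x,-y) = \kappa(-x,y)$ obtained by replacing $y$ with $-y$ in the hypothesis; combining this with the symmetry of $\kappa$ itself yields $\kappa^-(y,x) = \kappa(y,x) - \kappa(-y,x) = \kappa(x,y) - \kappa(-x,y) = \kappa^-(x,y)$, and the same manipulation handles $\kappa^+$.

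The substantive step is positive semi-definiteness. Here I would fix arbitrary points $x_1, \ldots, x_n$ and reals $c_1, \ldots, c_n$ and pass to the \emph{doubled} point set $y_1, \ldots, y_n, y_{n+1}, \ldots, y_{2n} := x_1, \ldots, x_n, -x_1, \ldots, -x_n$. Applying the assumed positive semi-definiteness of $\kappa$ to this enlarged set with the coefficient vector $(c_1, \ldots, c_n, \pm c_1, \ldots, \pm c_n)$, and using the two identities $\kappa(x_i,x_j) = \kappa(-x_i,-x_j)$ and $\kappa(x_i,-x_j) = \kappa(-x_i,x_j)$ to collapse the four $n \times n$ blocks, the quadratic form simplifies to
\begin{align*}
0 \le \sum_{k,\ell=1}^{2n} b_k b_\ell \, \kappa(y_k, y_\ell) = 2 \sum_{i,j=1}^n c_i c_j \bigl[ \kappa(x_i, x_j) \pm \kappa(-x_i, x_j) \bigr] = 2 \sum_{i,j=1}^n c_i c_j \, \kappa^{\pm}(x_i, x_j),
\end{align*}
where the upper sign (coefficients $+c_i$) gives $\kappa^+$ and the lower sign (coefficients $-c_i$) gives $\kappa^-$. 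Dividing by two shows that both Gram forms are nonnegative, which together with the symmetry established above certifies that $\kappa^{\pm}$ are kernels.

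The main obstacle is the positive semi-definiteness argument, and specifically spotting the doubling device that lets the parity hypothesis cancel the unwanted cross terms; everything else is routine substitution. An equivalent and perhaps more transparent route, which I would mention as an alternative, is to start from any feature map $\phi$ with $\kappa(x,y) = \langle \phi(x), \phi(y) \rangle$ and define $\psi^{\pm}(x) := (\phi(x) \pm \phi(-x))/\sqrt{2}$; the hypothesis then gives $\langle \psi^{\pm}(x), \psi^{\pm}(y) \rangle = \kappa^{\pm}(x,y)$ directly, exhibiting each $\kappa^{\pm}$ as an inner product of feature vectors and hence as a kernel. Both approaches hinge on exactly the same cancellation driven by $\kappa(x,y) = \kappa(-x,-y)$.
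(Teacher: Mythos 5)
Your proof is correct in all three components: the parity computations are immediate substitutions, the symmetry argument correctly uses the derived identity $\kappa(x,-y)=\kappa(-x,y)$, and the doubling argument for positive semi-definiteness is valid --- with coefficients $(c_1,\ldots,c_n,\pm c_1,\ldots,\pm c_n)$ on the point set $(x_1,\ldots,x_n,-x_1,\ldots,-x_n)$, the four blocks of the Gram form collapse exactly as you claim to $2\sum_{i,j}c_ic_j\kappa^{\pm}(x_i,x_j)\ge 0$. One point of comparison is moot, however: the paper does not prove this lemma at all; it imports the statement as Corollary 1 of \cite{krejnik2012reproducing}, and its appendix contains proofs only of Theorems \ref{theo:hs_convergence_h1}, \ref{theo:coordinate_version} and \ref{theo:linear_kernel}. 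So your argument is a self-contained proof of a result the paper treats as a black box, and there is no in-paper proof to diverge from. Of your two routes, the feature-map version is the cleaner one: writing $\kappa(x,y)=\langle\phi(x),\phi(y)\rangle$ (e.g.\ via $\phi(x)=\kappa(\cdot,x)$ in the RKHS) and setting $\psi^{\pm}(x)=\{\phi(x)\pm\phi(-x)\}/\sqrt{2}$ yields $\kappa^{\pm}(x,y)=\langle\psi^{\pm}(x),\psi^{\pm}(y)\rangle$ directly, which certifies symmetry and positive semi-definiteness in one stroke rather than as two separate verifications; the doubling argument is the same cancellation expressed at the level of Gram matrices. Either one would serve as a complete replacement for the external citation.
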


Lemma \ref{lem:odd_even_kernels} essentially states that, given any kernel $\kappa$, one can always construct the corresponding odd and even kernels $\kappa^-, \kappa^+$. In the sequel , we say that $\kappa^-, \kappa^+$ are the odd and even kernels induced by $\kappa$.

\section{Two-sided non-linear projections}\label{sec:projections}

Let $X$ be a random $p_1 \times p_2$ matrix. In linear dimension reduction for matrix data, the objective is to search for directions $a \in \mathbb{S}^{p_1 - 1}$, $b \in \mathbb{S}^{p_2 - 1}$, where $\mathbb{S}^{p - 1}$ denotes the unit sphere in $\mathbb{R}^p$, such that the two-sided projection $a' X b$ provides a meaningful reduction to the data. Having obtained the directions $a_1, \ldots , a_{d_1}$ and $b_1, \ldots , b_{d_2}$ (typically under orthogonality constraints within the two sets), their combinations yield a total of $d_1 d_2$ projections that are most conveniently arranged into the $d_1 \times d_2$ matrix $Z := (a_j' X b_k)_{j = 1}^{d_1}{}_{k = 1}^{d_2}$. This dimension reduction can be seen to preserve the matrix structure of the data as, indeed, each row of $Z$ shares the same $a$-vector and each column of $Z$ the same $b$-vector. Methods subscribing to this paradigm include, e.g., \cite{zhang20052d, li2010dimension, xue2014sufficient, ding2015tensor, virta2017independent, virta2018jade}.


In this section we formulate a \textit{non-linear} extension of this concept (simultaneous two-sided projection) using RKHS. Our objective with the extension is to preserve the previous idea that extracting a total of $d_1$ ``left'' directions and $d_2$ ``right'' directions gives us a $d_1 \times d_2$ reduced matrix where the rows share the same ``non-linear direction'' and similarly for columns. In the sequel, we let $\kappa_1: \mathbb{R}^{p_1} \times \mathbb{R}^{p_1} \to \mathbb{R}$ and $\kappa_2: \mathbb{R}^{p_2} \times \mathbb{R}^{p_2} \to \mathbb{R}$ denote the kernels corresponding to the two sides and we make the following assumption regarding them.

\begin{assumption}\label{assu:kernel_assumptions}
The kernels $\kappa_1, \kappa_2$ are either both odd or both even.
\end{assumption}

The oddness/evenness of the two kernels is required later on to ensure that the lack of fixed signs in singular value decomposition does not compromise the uniqueness of our projections. We additionally make the following assumption regarding the data $X$.

\begin{assumption}\label{assu:singular_values_simple}
For some $r \leq \min \{ p_1, p_2 \} $, the random matrix $X$ has almost surely rank $r$ and its non-zero singular values are almost surely simple.
\end{assumption}

The first part of Assumption \ref{assu:singular_values_simple} (almost surely fixed rank) is made for convenience and could easily be omitted at the cost of more cluttered notation. The second part (almost surely simple singular values) is satisfied, in particular, if $X$ has an absolutely continuous distribution w.r.t. the Lebesgue measure (in which case the rank $r = \min \{ p_1, p_2 \} $ almost surely).

Let $(u_j, v_j) \equiv (u_j(X), v_j(X))$, $j = 1, \ldots , r$, denote any pair of $j$th left and right singular vectors of the random matrix $X$. Under Assumption \ref{assu:singular_values_simple}, each of the pairs $(u_j, v_j)$, $j = 1, \ldots , r$, is almost surely uniquely defined up to the joint sign of the members of the pair. That is, if $(u_j, v_j)$ is a $j$th singular pair of $X$, then the only other $j$th singular pair of $X$ is $(-u_j, -v_j)$. We denote the $j$th singular value of $X$ by $\sigma_j \equiv \sigma_j(X)$.

Let now $f \in \mathcal{H}_1$ and $g \in \mathcal{H}_2$ be arbitrary functions that play the role of the projection directions $a \in \mathbb{R}^{p_1}, b \in \mathbb{R}^{p_2}$ in our non-linear extension. We define the two-sided projection of $X$ to the pair $(f, g)$ to be
\begin{align}\label{eq:reduced_variable}
\begin{split}
    & \sum_{j = 1}^r \sigma_j f ( u_j ) g ( v_j ) \\
    =&  \left\langle f, \left( \sum_{j = 1}^r \sigma_j \{ \kappa_1( \cdot,  u_j ) \otimes \kappa_2( \cdot,  v_j ) \} \right) g \right\rangle_1 \\
    =:& \langle f, U g \rangle_1,
\end{split}
\end{align}
where the random operator $U \equiv U(X) := \sum_{j = 1}^r \sigma_j \{ \kappa_1( \cdot,  u_j ) \otimes \kappa_2( \cdot,  v_j ) \}$ can be seen as the non-linear ``representation'' of the random matrix $X$. That $U$ is a Hilbert-Schmidt operator follows straightforwardly, see \eqref{eq:operator_norm_bound} in Section~\ref{sec:population}. The projection \eqref{eq:reduced_variable} is an exact non-linear analogy of the linear projection $ a' X b = \sum_{j = 1}^r \sigma_j a'u_j  b'v_j$, to which it reduces when the kernels $\kappa_1, \kappa_2$ are linear. The oddness or evenness of the kernels $\kappa_1, \kappa_2$ guarantees that the joint sign of any individual singular pair plays no role in the construction of the projection. E.g., if both kernels are odd, we have for all $u \in \mathbb{R}^{p_1}$, $v \in \mathbb{R}^{p_2}$ that
\begin{align*}
     \kappa_1( \cdot,  -u ) \otimes \kappa_2( \cdot,  -v )  =& \{-\kappa_1( \cdot,  u ) \} \otimes \{ - \kappa_2( \cdot,  v )\}\\
     =& \kappa_1( \cdot,  u ) \otimes \kappa_2( \cdot,  v ).
\end{align*}
Consequently, the reduced variable $\langle f, U g \rangle_1$ in \eqref{eq:reduced_variable} is almost surely uniquely defined, regardless of which particular singular value decomposition of $X$ we use. Note that this would not be the case without the second part of Assumption \ref{assu:singular_values_simple} as then more freedom would be allowed in choosing the singular vectors corresponding to singular values with multiplicity greater than one, and this freedom would not, in general, be cancelled out unless we used linear kernels.

\section{MNPCA}\label{sec:population}

Recall that (2D)$^2${PCA} \cite{zhang20052d} is a method of linear dimension reduction that can be seen as an extension of principal component analysis to matrices (when $p_2 = 1$ it is equivalent to the standard PCA). In (2D)$^2${PCA}, the left projection directions $a_1, \ldots, a_{d_1}$ are found as the first $d_1$ eigenvectors of the matrix $\mathrm{E} [ \{ X - \mathrm{E}(X) \} \{ X - \mathrm{E}(X) \}' ]$, whereas their right-hand side counterparts $b_1, \ldots, b_{d_2}$ are analogously taken to be the first $d_2$ eigenvectors of the matrix $\mathrm{E} [ \{ X - \mathrm{E}(X) \}' \{ X - \mathrm{E}(X) \} ]$. Given the projection directions, the reduced matrix $Z$ containing the $d_1d_2$ combined projections is formed as $Z := (a_k' \{ X - \mathrm{E}(X) \} b_\ell)_{k = 1}^{d_1}{}_{\ell = 1}^{d_2}$. If multiple eigenvalues are encountered, the corresponding eigenvectors and projections are not uniquely defined. 

Prior to defining our non-linear analogue of (2D)$^2${PCA}, we first construct the first and second moments of the operator $U$ defined in Section \ref{sec:projections}. For this, we make a weak assumption about the kernels $\kappa_1, \kappa_2$ that will simplify the presentation to come. We note that this assumption is made simply for convenience and that our theory would function perfectly well even without it, assuming that the later moment assumptions are suitably adjusted. 

\begin{assumption}\label{assu:bounded_on_unit_sphere}
There exists constants $C_1, C_2 > 0$ such that, for all $u \in \mathbb{S}^{p_1 - 1}, v \in \mathbb{S}^{p_2 - 1}$,
\begin{align*}
    \kappa_1(u, u) < C_1 \quad \mbox{and} \quad \kappa_2(v, v) < C_2.
\end{align*}
\end{assumption}

Assumption \ref{assu:bounded_on_unit_sphere} is satisfied, e.g., for even and odd kernels induced (in the sense of Lemma \ref{lem:odd_even_kernels}) by all Gaussian, Laplace and polynomial kernels.


Recall then that we defined the random operator $ U $ in Section \ref{sec:projections} as
\begin{align*}
    U = \sum_{j = 1}^r \sigma_j \{ \kappa_1( \cdot,  u_j ) \otimes \kappa_2( \cdot,  v_j ) \},
\end{align*}
where $(u_j, v_j)$ is a $j$th singular pair of the almost surely rank-$r$ random matrix $X$ and $\sigma_j$ denotes the corresponding singular value. To construct moments for $U$, we define the expected value of an arbitrary random operator $Y$ taking values in $\mathcal{S}(\mathcal{H}_i, \mathcal{H}_j)$ in the usual way, i.e., as any Hilbert-Schmidt operator $\mathrm{E}(Y) \in \mathcal{S}(\mathcal{H}_i, \mathcal{H}_j)$ satisfying
\begin{align*}
    \langle A, \mathrm{E}(Y) \rangle_{ij} = \mathrm{E} \langle A, Y \rangle_{ij},
\end{align*}
for all $A \in \mathcal{S}(\mathcal{H}_i, \mathcal{H}_j)$. By the Riesz representation theorem \cite{conway1990course}, the expectation $\mathrm{E}(Y)$ exists and is unique as soon as $\mathrm{E} \| Y \|_{ij} < \infty$. Defined like this, the expectation of a random operator is straightforwardly verified to satisfy the following intuitive properties (where we implicitly assume that the relevant expectations exist as unique): (i) The expectation is linear in the sense that $\mathrm{E}(a_1 Y_1 + a_2 Y_2) = a_1 \mathrm{E}(Y_1) + a_2 \mathrm{E}(Y_2)$ for all scalars $a_1, a_2 \in \mathbb{R}$ and all random operators $Y_1, Y_2$. (ii) For any random operator $Y$, we have $\mathrm{E}(Y^*) = \{ \mathrm{E}(Y) \}^*$. In particular, if the operator $Y$ is self-adjoint, then so is $E(Y)$. (iii) For any fixed operator $A$ and any random operator $Y$, we have $\mathrm{E}(AY) = A \mathrm{E}(Y)$.

Under Assumption \ref{assu:bounded_on_unit_sphere}, the Hilbert-Schmidt norm of $U$ has a particularly simple upper bound:
\begin{align}\label{eq:operator_norm_bound}
\begin{split}
    \| U \|_{21} \leq& \sum_{j = 1}^r \sigma_j \|  \kappa_1( \cdot,  u_j )\otimes \kappa_2( \cdot,  v_j ) \|_{21} \\
    \leq& (C_1 C_2)^{1/2} r \| X \|_2,
\end{split}
\end{align}
where $ \| X \|_2 = \sigma_1 $ is the operator norm of the random matrix $X$. Consequently, the expectation $\mathrm{E}(U)$ is well-defined and unique as soon as $\mathrm{E} \| X \|_2 < \infty$. However, we instead make the following, stricter assumption that is needed when we next construct the second moment of $U$.

\begin{assumption}\label{assu:second_moment}
We have $\mathrm{E} \| X \|^2_2 < \infty$.
\end{assumption}


By the sub-multiplicativity of the Hilbert-Schmidt norm, $\|  U U^* \|_{11} \leq \| U \|_{21}^2$, showing that Assumption \ref{assu:second_moment} indeed guarantees that $\mathrm{E}(UU^*)$ is well-defined. Having constructed $\mathrm{E}(U)$ and $\mathrm{E}(UU^*)$, we take the operator analogy of the matrix $\mathrm{E} [ \{ X - \mathrm{E}(X) \} \{ X - \mathrm{E}(X) \}' ]$ to be the Hilbert-Schmidt operator $H_1 \equiv H_1(X) \in \mathcal{S}( \mathcal{H}_1, \mathcal{H}_1) $ defined as
\begin{align}\label{eq:operator_h1x}
    H_1 :=& \mathrm{E}[ \{ U - \mathrm{E}(U) \} \{ U - \mathrm{E}(U) \}^* ]\\
    =& \mathrm{E}(UU^*) - \mathrm{E}( U ) \mathrm{E}(U)^*.
\end{align}

In (2D)$^2${PCA}, the left projection directions are obtained as the eigenvectors of the matrix $\mathrm{E} [ \{ X - \mathrm{E}(X) \} \{ X - \mathrm{E}(X) \}' ]$. We next show that the equivalent is well-defined in the non-linear case. Denoting $Y := U - \mathrm{E}(U) $, as $ Y Y^* $ is self-adjoint, so is the operator $H_1$. Moreover, like its linear counterpart, $H_1$ is also positive semi-definite, as is seen by writing, for an arbitrary $f \in \mathcal{H}_1$,
\begin{align*}
    \langle f, H_1 f \rangle_1 =& \langle (f \otimes f), H_1 \rangle_{11} \\
    =& \mathrm{E} \langle (f \otimes f), Y Y^* \rangle_{11}\\
    =& \mathrm{E} \| Y^* f \|_{11}^2.
\end{align*}
As Hilbert-Schmidt operators are compact \cite[p.267]{conway1990course}, $H_1$ admits the spectral decomposition $H_1 = \sum_{k = 1}^\infty \lambda_k (a_k \otimes a_k)$ where $\lambda_1 \geq \lambda_2 \geq \ldots \geq 0$ and $\{ a_k \}$ is an orthonormal basis of $\mathcal{H}_1$ \cite[Theorem 4.10.4]{debnath1999introduction}. We can similarly obtain the orthonormal basis $\{ b_k \}$ of $\mathcal{H}_2$ corresponding to the operator $H_2 := \mathrm{E}[ \{ U - \mathrm{E}(U) \}^* \{ U - \mathrm{E}(U) \} ]$.

We are now in position to define MNPCA. We assume, for now, that the reduced ranks $d_1, d_2$ are known and postpone the discussion of their estimation later to Section \ref{sec:tuning} on tuning parameter selection. Let then $a_1, \ldots, a_{d_1}$ and $b_1, \ldots, b_{d_2}$ be any first $d_1$ and $d_2$ eigenvectors of the self-adjoint positive semi-definite operators $H_1$ and $H_2$, respectively. The $d_1 \times d_2$ matrix $Z$ of non-linear, two-dimensional principal components of $X$ is now found element-wise as
\begin{align}\label{eq:non_linear_extension}
    z_{k\ell} := \langle a_k, \{ U - \mathrm{E}(U) \} b_\ell \rangle_1.
\end{align}
Each row of the matrix $Z$ shares the same non-linear row direction $a_k$ (and analogously for the columns of $Z$), implying that it is meaningful to view $Z$ as a matrix, instead of simply as a collection of latent variables.  Moreover, as lower indices $k$ correspond to larger eigenvalues and greater amount of information, we expect the most interesting part of the matrix $Z$ to be its top left corner. 


\section{Sample consistency}\label{sec:asymptotics}

We next turn our attention to the sample version of MNPCA and its asymptotic properties. Without loss of generality we restrict our discussion to the left-hand side of the model, the equivalent results for the right-hand side following instantly by symmetry.

Let $X_1, \ldots , X_n$ be a sample of $p_1 \times p_2$ matrices from the distribution of $X$ and let $(u_{ij}, v_{ij})$ and $\sigma_{ij}$ denote the $j$th singular pair of the $i$th observed matrix and the corresponding singular value, respectively. For each $i = 1, \ldots , n$, we let $U_i \in \mathcal{B}(\mathcal{H}_2, \mathcal{H}_1)$ denote the linear operator
\begin{align*}
    U_i := \sum_{j = 1}^r \sigma_j \{ \kappa_1( \cdot,  u_j ) \otimes \kappa_2( \cdot,  v_j ) \}.
\end{align*}
Under Assumption \ref{assu:singular_values_simple} and for odd/even kernels $\kappa_1, \kappa_2$, the operators $U_1, \ldots , U_n$ are almost surely unique and a computation similar to \eqref{eq:operator_norm_bound} reveals that they are Hilbert-Schmidt operators. By defining the  ``average'' operator as $\bar{U}_n := (1/n) \sum_{i = 1}^n U_i$, the sample version $H_{n1} \in \mathcal{S}( \mathcal{H}_1, \mathcal{H}_1)$ of the operator $H_1$ is defined as,
\begin{align*}
     H_{n1} := \frac{1}{n} \sum_{i = 1}^n (U_i - \bar{U}_n)(U_i - \bar{U}_n)^*.
\end{align*}
As our first asymptotic result, we show that $H_{n1}$ converges to $H_1$ in the Hilbert-Schmidt norm at the standard root-$n$ rate, as soon as the fourth moment of $X$ is bounded.

\begin{assumption}\label{assu:fourth_moment}
We have $\mathrm{E} \| X \|^4_2 < \infty$.
\end{assumption}

\begin{theorem}\label{theo:hs_convergence_h1}
Under Assumptions \ref{assu:kernel_assumptions}, \ref{assu:singular_values_simple}, \ref{assu:bounded_on_unit_sphere} and \ref{assu:fourth_moment}, we have, as $n \rightarrow \infty$,
\begin{align*}
    \| H_{n1} - H_1 \|_{11} = \mathcal{O}_p \left( \frac{1}{\sqrt{n}} \right).
\end{align*}
\end{theorem}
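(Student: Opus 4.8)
The plan is to reduce everything to a Hilbert--Schmidt law of large numbers for averages of i.i.d. Hilbert--Schmidt operators, after first rewriting both $H_{n1}$ and $H_1$ in a ``second moment minus mean-squared'' form. Expanding the defining sum and cancelling the cross terms gives the algebraic identity
\begin{align*}
    H_{n1} = \frac{1}{n}\sum_{i=1}^n U_i U_i^* - \bar{U}_n \bar{U}_n^*,
\end{align*}
while, writing $M_1 := \mathrm{E}(U)$ and $M_2 := \mathrm{E}(UU^*)$, the population operator reads $H_1 = M_2 - M_1 M_1^*$. Subtracting, the difference splits as
\begin{align*}
    H_{n1} - H_1 = \Big( \frac{1}{n}\sum_{i=1}^n U_i U_i^* - M_2 \Big) - \big( \bar{U}_n \bar{U}_n^* - M_1 M_1^* \big),
\end{align*}
and by the triangle inequality it suffices to bound each of the two bracketed terms at rate $\mathcal{O}_p(n^{-1/2})$.

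The workhorse is the following elementary fact: if $W_1, \ldots, W_n$ are i.i.d. $\mathcal{S}(\mathcal{H}_i,\mathcal{H}_j)$-valued random operators with $\mathrm{E}\|W_1\|_{ij}^2 < \infty$, then the cross terms vanish by independence and zero-centering, giving $\mathrm{E}\big\| \tfrac{1}{n}\sum_{i=1}^n (W_i - \mathrm{E} W_1)\big\|_{ij}^2 = \tfrac{1}{n}\mathrm{E}\|W_1 - \mathrm{E} W_1\|_{ij}^2 = \mathcal{O}(n^{-1})$, whence Markov's inequality yields $\|\tfrac{1}{n}\sum_i W_i - \mathrm{E} W_1\|_{ij} = \mathcal{O}_p(n^{-1/2})$. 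Since $X_1, \ldots, X_n$ are i.i.d. and, under Assumptions \ref{assu:kernel_assumptions} and \ref{assu:singular_values_simple}, each $U_i$ is an almost surely unique measurable function of $X_i$, the operators $U_i$, and hence also $U_i U_i^*$, are i.i.d. Applying the fact to $W_i = U_i U_i^*$ handles the first bracketed term; its moment condition holds because $\|U_i U_i^*\|_{11} \le \|U_i\|_{21}^2 \le C_1 C_2 r^2 \|X_i\|_2^2$ by \eqref{eq:operator_norm_bound}, so that $\mathrm{E}\|U_i U_i^*\|_{11}^2 \le (C_1 C_2)^2 r^4 \mathrm{E}\|X\|_2^4 < \infty$ under Assumption \ref{assu:fourth_moment}. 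Applying the same fact to $W_i = U_i$, whose second moment is already finite under Assumption \ref{assu:second_moment}, gives $\|\bar{U}_n - M_1\|_{21} = \mathcal{O}_p(n^{-1/2})$.

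For the second bracketed term I would linearize the product around $M_1$ via
\begin{align*}
    \bar{U}_n \bar{U}_n^* - M_1 M_1^* = \bar{U}_n (\bar{U}_n - M_1)^* + (\bar{U}_n - M_1) M_1^*,
\end{align*}
and bound using sub-multiplicativity $\|AB\|_{11} \le \|A\|_{21}\|B\|_{12}$ together with $\|F^*\|_{12} = \|F\|_{21}$. Since $\|\bar{U}_n\|_{21} \le \|M_1\|_{21} + \|\bar{U}_n - M_1\|_{21} = \mathcal{O}_p(1)$ and $\|M_1\|_{21}$ is a finite constant, this gives $\|\bar{U}_n \bar{U}_n^* - M_1 M_1^*\|_{11} \le \|\bar{U}_n\|_{21}\,\|\bar{U}_n - M_1\|_{21} + \|\bar{U}_n - M_1\|_{21}\,\|M_1\|_{21} = \mathcal{O}_p(n^{-1/2})$. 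Combining the two bounds through the triangle inequality completes the argument.

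The step I expect to carry the real content is the second-moment control of $U_i U_i^*$: this is precisely where the fourth-moment Assumption \ref{assu:fourth_moment} is consumed, through the chain $\|U_iU_i^*\|_{11} \le \|U_i\|_{21}^2 \le C_1 C_2 r^2 \|X_i\|_2^2$. Everything else — the Hilbert-space law of large numbers itself, the i.i.d. reduction via almost-sure uniqueness of the $U_i$, and the product linearization — is routine once the relevant moments are finite, so I would not anticipate genuine difficulty there.
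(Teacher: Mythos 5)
Your proof is correct and follows essentially the same route as the paper's: the same variance-plus-Markov law of large numbers applied to the i.i.d.\ operators $U_i$ and $U_i U_i^*$, the same linearization of $\bar{U}_n \bar{U}_n^* - \mathrm{E}(U)\mathrm{E}(U)^*$ with sub-multiplicativity, and the same triangle-inequality assembly. The only nit is that Assumption \ref{assu:second_moment} is not among the theorem's hypotheses, so the finiteness of $\mathrm{E}\| U \|_{21}^2$ should instead be deduced from Assumption \ref{assu:fourth_moment} via Jensen's inequality, which is immediate.
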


Under suitable regularity conditions, the convergence of $H_{n1}$ in Theorem \ref{theo:hs_convergence_h1} guarantees that also the corresponding eigenspaces are consistent. A standard assumption for this in the kernel dimension reduction literature is that the operator is question has finite rank and its positive eigenvalues are distinct \cite{zwald2005convergence, li2017nonlinear, li2022dimension}. A finite rank essentially ensures that a sample can be used to capture the full information content of the data, whereas having distinct eigenvalues makes certain that the individual directions can be identified. Under these conditions, the consistency of the eigenvectors follows from \cite[Theorem~2]{zwald2005convergence}.

\begin{assumption}\label{assu:distinct_eval}
The operator $H_1$ has finite rank $d_1$ and its positive eigenvalues are distinct.
\end{assumption}

\begin{corollary}\label{cor:eigen}
    Let Assumptions \ref{assu:kernel_assumptions}, \ref{assu:singular_values_simple}, \ref{assu:bounded_on_unit_sphere}, \ref{assu:fourth_moment} and \ref{assu:distinct_eval} hold. Denote by $a_k$ and $a_{nk}$, $k = 1, \ldots, d_1$, any $k$th eigenvectors of $H_1$ and $H_{n1}$, respectively, with their signs chosen such that $\langle a_k, a_{nk} \rangle_1 \geq 0$. Then, as $n \rightarrow \infty$,
    \begin{align*}
        \| a_{nk} - a_k \|_1 = \mathcal{O}_p \left( \frac{1}{\sqrt{n}} \right).
    \end{align*}
\end{corollary}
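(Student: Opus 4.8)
The plan is to derive Corollary~\ref{cor:eigen} as a direct consequence of Theorem~\ref{theo:hs_convergence_h1} together with the perturbation bound for spectral projections of Hilbert-Schmidt operators in \cite[Theorem~2]{zwald2005convergence}. First I would observe that Assumption~\ref{assu:distinct_eval} supplies exactly the two ingredients that \cite{zwald2005convergence} requires: the finite rank $d_1$ of $H_1$ guarantees that there are only finitely many non-zero eigenvalues to track, and the distinctness of the positive eigenvalues $\lambda_1 > \lambda_2 > \cdots > \lambda_{d_1} > 0$ guarantees that each eigenvalue is simple, so that for each $k \leq d_1$ the spectral gap $\delta_k := \min\{\lambda_{k-1} - \lambda_k,\ \lambda_k - \lambda_{k+1}\}$ (with the convention $\lambda_0 = +\infty$, and where $\lambda_{d_1+1} = 0$ keeps the gap above the null space positive) is strictly positive. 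I would record these gaps as fixed, population-level constants.

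Next I would invoke the Davis--Kahan-type perturbation result in the Hilbert-Schmidt setting: for a self-adjoint positive semi-definite Hilbert-Schmidt operator with a simple $k$th eigenvalue, the rank-one spectral projection $P_k = a_k \otimes a_k$ associated to the $k$th eigenspace satisfies a Lipschitz-type bound of the form
\begin{align*}
    \| P_{nk} - P_k \|_{11} \leq \frac{c_k}{\delta_k}\, \| H_{n1} - H_1 \|_{11},
\end{align*}
valid on the event that $\| H_{n1} - H_1 \|_{11}$ is small enough relative to $\delta_k$ (so that the eigenvalues do not cross and the $k$th eigenprojection of $H_{n1}$ remains rank one). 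By Theorem~\ref{theo:hs_convergence_h1} the right-hand side is $\mathcal{O}_p(1/\sqrt{n})$, and since $\| H_{n1} - H_1 \|_{11} \to 0$ in probability this small-perturbation event has probability tending to one; hence $\| P_{nk} - P_k \|_{11} = \mathcal{O}_p(1/\sqrt{n})$ for each $k = 1, \ldots, d_1$.

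It then remains to pass from the convergence of the projections $P_{nk}$ to the convergence of the eigenvectors $a_{nk}$ themselves. Here the sign ambiguity of eigenvectors is the reason the statement fixes signs via $\langle a_k, a_{nk}\rangle_1 \geq 0$. For rank-one projections a standard identity gives, under this sign normalization,
\begin{align*}
    \| a_{nk} - a_k \|_1^2 = 2 \bigl( 1 - \langle a_k, a_{nk}\rangle_1 \bigr) \leq 2\,\| P_{nk} - P_k \|_{11},
\end{align*}
where the inequality follows because $\| P_{nk} - P_k \|_{11}^2 = 2(1 - \langle a_k, a_{nk}\rangle_1^2) \geq 2(1 - \langle a_k, a_{nk}\rangle_1)$ when $0 \leq \langle a_k, a_{nk}\rangle_1 \leq 1$. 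Combining this with the projection bound yields $\| a_{nk} - a_k \|_1 = \mathcal{O}_p(1/\sqrt{n})$, as claimed.

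The main obstacle, and the step deserving the most care, is the transition from operator convergence to projection convergence: one must verify that the hypotheses of \cite[Theorem~2]{zwald2005convergence} are genuinely met in our Hilbert-Schmidt operator setting (self-adjointness and positive semi-definiteness of $H_{n1}$ and $H_1$, which were established in Section~\ref{sec:population}, and the simplicity of the target eigenvalue, which Assumption~\ref{assu:distinct_eval} provides), and that the perturbation constant depends only on the fixed spectral gap $\delta_k$ and not on $n$. The elementary algebra relating eigenvector distance to projection distance is routine by comparison; the genuine work is confirming that the abstract perturbation theorem applies verbatim and that the controlling gap is bounded away from zero uniformly in $n$.
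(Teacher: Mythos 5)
Your proposal is correct and takes essentially the same route as the paper, which gives no separate appendix proof of this corollary: it, too, obtains the result by feeding the root-$n$ rate of Theorem \ref{theo:hs_convergence_h1} into the eigenprojection perturbation bound of \cite[Theorem~2]{zwald2005convergence}, with Assumption \ref{assu:distinct_eval} supplying the fixed, $n$-independent spectral gaps. The one slip to fix is your final display: taken literally, $\| a_{nk} - a_k \|_1^2 \leq 2 \| P_{nk} - P_k \|_{11}$ only yields $\| a_{nk} - a_k \|_1 = \mathcal{O}_p(n^{-1/4})$, so you should instead use the stronger inequality that your own parenthetical already establishes, namely $\| a_{nk} - a_k \|_1^2 = 2(1 - \langle a_k, a_{nk} \rangle_1) \leq 2(1 - \langle a_k, a_{nk} \rangle_1^2) = \| P_{nk} - P_k \|_{11}^2$, which gives $\| a_{nk} - a_k \|_1 \leq \| P_{nk} - P_k \|_{11} = \mathcal{O}_p(1/\sqrt{n})$ as claimed.
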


Finally, the proof of Theorem \ref{theo:hs_convergence_h1} reveals that $\mathrm{E}(U)$ can be estimated root-$n$-consistently by the operator $\bar{U}_n $, guaranteeing together with Corollary \ref{cor:eigen} that the sample non-linear two-dimensional principal components
\begin{align*}
    z_{i, k\ell} := \langle a_{nk}, ( U_i - \bar{U}_n ) b_{n\ell} \rangle_1, \quad i = 1, \ldots, n,
\end{align*} 
are themselves a good approximation to their population counterparts in \eqref{eq:non_linear_extension}.







\section{Coordinate representation}\label{sec:coordinate}

The results of the preceding section were stated on the operator level, and, in order to apply MNPCA in practice, we next develop a finite-dimensional representation of the method. For this, assume that we are given a sample $X_1, \ldots , X_n$ of $p_1 \times p_2$ matrices from the distribution of $X$. As before, we let $(u_{ij}, v_{ij})$ and $\sigma_{ij}$ denote the $j$th singular pair of the $i$th observed matrix and the corresponding singular value, respectively. 



In standard kernel methodology for vector-valued data, it is typical to take the sample counterpart of the RKHS induced by the kernel $\kappa$ to be the linear span of the evaluation elements $\kappa( \cdot, x_i)$ of the observed sample of vectors $x_1, \ldots , x_n$. In our case of matrix data, the natural counterpart to this procedure is to use the evaluation elements of the singular vectors instead. A key question is then how many singular vectors from each $X_i$ should be used. This choice has a direct impact on the computational complexity of MNPCA as using, say, $m$ singular vectors from each observation yields a kernel matrix of the size $mn \times mn$, leading to increased computational burden for larger $m$. On the other hand, a larger $m$ also guarantees a richer function space, making the choice a trade-off (and $m$ essentially a tuning parameter). For notational simplicity, we have in the following restricted ourselves to using only the first singular vectors, $m = 1$, but the formulas could easily be adapted for other $m$ as well. We thus define the sample counterpart of the RKHS $\mathcal{H}_{1}$ as,
\begin{align*}
    \mathcal{H}_{n1} := \mathrm{span}(\mathcal{B}_{n1}),
\end{align*}
where the spanning set $\mathcal{B}_{n1} := \{ \kappa_1( \cdot, u_{i1}) \mid i = 1 , \ldots , n \}$ is taken to satisfy the following condition, which implies, in particular, that $\mathcal{B}_{n1}$ forms a basis for $\mathcal{H}_{n1}$ and that $\mathrm{dim}(\mathcal{H}_{n1}) = n$.

\begin{assumption}\label{assu:pd_kernel_matrix}
The elements of $\mathcal{B}_{n1}$ are linearly independent.
\end{assumption}

For an arbitrary member $f$ of $\mathcal{H}_{n1}$, we define its coordinate $[f] \in \mathbb{R}^n$ to be the vector of its coefficients in the basis $\mathcal{B}_{n1}$. Thus, letting $k_1:\mathbb{R}^p \to \mathbb{R}^n$ denote the function acting as $k_1(x) = (\kappa_1(x, u_{11}), \ldots , \kappa_1(x, u_{n1}))'$, we have
\begin{align}\label{eq:sample_evaluation_property}
    f(x) = [f]' k_1(x),
\end{align}
for every $x \in \mathbb{R}^{p_1}, f \in \mathcal{H}_{n1}$. Let $K_1 \in \mathbb{R}^{n \times n}$ denote the kernel matrix whose $(i, j)$-element is $\langle \kappa_1(\cdot, u_{i1}), \kappa_1(\cdot, u_{j1}) \rangle_1 = \kappa_1(u_{i1}, u_{j1})$. To turn $\mathcal{H}_{n1}$ into a Hilbert space, we equip $\mathcal{H}_{n1}$ with the inner product
\begin{align}\label{eq:sample_inner_product}
    \langle f, g \rangle_{n1} := [f]' K_1 [g],
\end{align}
whose positive-definiteness is guaranteed by Assumption \ref{assu:pd_kernel_matrix}. Note, however, that the resulting space is, strictly speaking, not an RKHS as we take the domain of its elements to be the full space $\mathbb{R}^{p_1}$ instead of the set $\{ u_{11}, \ldots , u_{n1} \}$. This means, in particular, that the reproducing property $f(x) = \langle f, \kappa_1( \cdot , x) \rangle_{n1}$ holds only when $x \in \{ u_{11}, \ldots , u_{n1} \}$ (however, for $x$ not satisfying this, we still have the relation \eqref{eq:sample_evaluation_property}). Finally, we construct the right-hand side counterparts $\mathcal{H}_{n2}, K_2$ similarly, under the analogous assumptions.

Under the above specifications, the sample algorithm for MNPCA (as described in Section \ref{sec:population}) is given in the next theorem.

\begin{theorem}\label{theo:coordinate_version}
Denote $F_i := \sum_{j=1}^r \sigma_{ij} k_1(u_{ij}) k_2(v_{ij})'$, $i = 1, \ldots , n$, and let $a_1, \ldots , a_{d_1}$ and $b_1, \ldots , b_{d_2}$ be any first $d_1$ and $d_2$ eigenvectors of the $n \times n$ matrices
\begin{align}\label{eq:coordinate_matrix_1}
    P_1 := K_1^{-1/2} \left( \frac{1}{n} \sum_{i=1}^n F_i K_2^{-1} F_i' - \bar{F} K_2^{-1} \bar{F}' \right)  K_1^{-1/2},
\end{align}
and
\begin{align*}
    P_2 := K_2^{-1/2} \left( \frac{1}{n} \sum_{i=1}^n F_i' K_1^{-1} F_i - \bar{F}' K_1^{-1} \bar{F} \right)  K_2^{-1/2},
\end{align*}
respectively, where $\bar{F} := (1/n) \sum_{i=1}^n F_i$. Then, the $d_1 \times d_2$ matrix $Z_i$ of the MNPCA-coordinates of the $i$th observation is given by
\begin{align}\label{eq:coordinate_version}
    z_{i, jk} := a_j' K_1^{-1/2} (F_i - \bar{F}) K_2^{-1/2} b_k.
\end{align}
\end{theorem}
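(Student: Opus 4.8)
The plan is to translate every operator-level object used by the sample version of MNPCA into its coordinate representation in the bases $\mathcal{B}_{n1}, \mathcal{B}_{n2}$, and then to reduce the resulting $K$-weighted eigenproblem to the symmetric one stated for $P_1$ and $P_2$.

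First I would establish the basic bilinear identity $\langle f, U_i g \rangle_1 = [f]' F_i [g]$ for every $f \in \mathcal{H}_{n1}$ and $g \in \mathcal{H}_{n2}$. Expanding $U_i$ and using the tensor action gives $\langle f, U_i g \rangle_1 = \sum_{j=1}^r \sigma_{ij} \langle f, \kappa_1(\cdot, u_{ij}) \rangle_1 \langle \kappa_2(\cdot, v_{ij}), g \rangle_2$. The reproducing property of the ambient spaces $\mathcal{H}_1, \mathcal{H}_2$ turns the two inner products into $f(u_{ij})$ and $g(v_{ij})$, and the evaluation property \eqref{eq:sample_evaluation_property} rewrites these as $[f]' k_1(u_{ij})$ and $[g]' k_2(v_{ij})$; summing over $j$ produces exactly $[f]' F_i [g]$ with $F_i$ as defined, and averaging over $i$ yields $\langle f, \bar{U}_n g \rangle_1 = [f]' \bar{F} [g]$. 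It is worth emphasizing that \eqref{eq:sample_evaluation_property} holds for every argument in $\mathbb{R}^{p_1}$, so it legitimately applies to the trailing singular vectors $u_{ij}, v_{ij}$ with $j > 1$, which do not themselves belong to the spanning sets.

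Next I would read off the coordinate matrices of the sample operators acting within the finite-dimensional spaces. Writing the compression of $U_i$ to $\mathcal{H}_{n1}$ in coordinates via \eqref{eq:sample_inner_product}, the requirement $[f]' K_1 [U_i g] = [f]' F_i [g]$ for all $[f]$ forces the coordinate map of $U_i$ to be $K_1^{-1} F_i$; the same argument gives $K_2^{-1} F_i'$ for $U_i^*$ and hence $K_1^{-1} F_i K_2^{-1} F_i'$ for $U_i U_i^*$. Inserting the centering $U_i - \bar{U}_n$ and averaging, the sample operator $H_{n1}$ is represented on $\mathcal{H}_{n1}$ by $M_1 := K_1^{-1} C_1$, where $C_1 := (1/n)\sum_i F_i K_2^{-1} F_i' - \bar{F} K_2^{-1} \bar{F}'$ is the bracketed term of $P_1$; the cross terms collapse exactly as in the scalar centering identity. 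I would then match the eigenproblems: the eigenfunctions of $H_{n1}$ are, in coordinates, the solutions of the generalized problem $C_1 [\phi] = \lambda K_1 [\phi]$ subject to the $K_1$-orthonormality $[\phi_j]' K_1 [\phi_k] = \delta_{jk}$ dictated by \eqref{eq:sample_inner_product}. The substitution $a = K_1^{1/2} [\phi]$ converts this into the symmetric standard problem $P_1 a = \lambda a$ with Euclidean-orthonormal $a$, so the leading eigenvectors $a_1, \ldots, a_{d_1}$ of $P_1$ correspond to the $H_{n1}$-eigenfunctions $\phi_j \in \mathcal{H}_{n1}$ with $[\phi_j] = K_1^{-1/2} a_j$, and symmetrically $[\psi_k] = K_2^{-1/2} b_k$ on the right. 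Substituting these into $z_{i,jk} = \langle \phi_j, (U_i - \bar{U}_n) \psi_k \rangle_1$ and invoking the first-step identity gives $z_{i,jk} = [\phi_j]'(F_i - \bar{F})[\psi_k] = a_j' K_1^{-1/2} (F_i - \bar{F}) K_2^{-1/2} b_k$, which is \eqref{eq:coordinate_version}.

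The step I expect to require the most care is the passage from the operator eigenproblem for $H_{n1}$ to the symmetric matrix $P_1$: one must keep the two distinct inner products straight (the $K$-weighted $\langle \cdot, \cdot \rangle_{n1}$ versus the Euclidean one on coordinate vectors), verify that the $K_1^{1/2}$-conjugation simultaneously symmetrizes the problem and sends $K_1$-orthonormal frames to Euclidean-orthonormal ones, and confirm that the $d_1$ leading directions are preserved since conjugation leaves the eigenvalues unchanged. The invertibility of $K_1$ and $K_2$ used throughout (for the powers $K_i^{-1/2}$ and for the identification of coordinates) is precisely Assumption \ref{assu:pd_kernel_matrix}.
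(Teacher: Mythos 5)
Your proposal is correct and follows essentially the same route as the paper's own proof: you derive the coordinate identities $[U_i] = K_1^{-1}F_i$ and $[U_i^*] = K_2^{-1}F_i'$ from the bilinear relation $\langle f, U_i g\rangle = [f]'F_i[g]$, obtain the coordinate of $H_{n1}$, convert the $K_1$-weighted eigenproblem to the symmetric one for $P_1$ via the substitution $a = K_1^{1/2}[\phi]$, and finish with the same bilinear identity for the centered operator. The only cosmetic difference is that the paper phrases the eigenproblem equivalence variationally (as constrained maximization of $f \mapsto \langle f, H_{n1} f\rangle_{n1}$) whereas you phrase it as a generalized eigenvalue problem $C_1[\phi] = \lambda K_1[\phi]$; these are equivalent, and your explicit remark that \eqref{eq:sample_evaluation_property} applies to the trailing singular vectors $u_{ij}$, $j>1$, correctly addresses the same subtlety the paper handles by defining the sample operator through its bilinear form.
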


Two notes are in order. First, the proof of Theorem \ref{theo:coordinate_version} reveals that to obtain the MNPCA-coordinate matrix of an out-of-sample observation $X_0$, it is sufficient to replace $F_i$ in \eqref{eq:coordinate_version} with the equivalent matrix having the singular vectors/values of $X_0$ in place of those of $X_i$. Secondly, the presence of the inverses $K_1^{-1}, K_2^{-1}$ might make the procedure numerically unstable in practice, and in our later examples we have replaced them with the corresponding regularized inverses $K^\dagger := ( K + \varepsilon \| K \|_2 )^{-1}$ where $\varepsilon = 0.2$. Similarly, one might want to truncate the decompositions $F_i$ after some small number of singular values, say, two or three (effectively assuming that the rank in Assumption \ref{assu:singular_values_simple} is $d = 2$ or $d = 3$). 

The sample procedure described in Theorem \ref{theo:coordinate_version} can be seen as a true non-linear generalization of (2D)$^2${PCA} in the sense that it reverts back to the standard (2D)$^2${PCA} when a linear kernel is used, as long as the sets of leading singular vectors $u_{11}, \ldots , u_{n1}$ and $v_{11}, \ldots , v_{n1}$ span the full spaces $\mathbb{R}^{p_1}$ and $\mathbb{R}^{p_2}$, respectively. This result, formalized in Theorem \ref{theo:linear_kernel} below, is proven in the appendix. Note also that all linear kernels are odd, satisfying our requirement of odd/even kernels.

\begin{theorem}\label{theo:linear_kernel}
Let $\kappa_1, \kappa_2$ be linear kernels and assume that $n \geq \max\{p_1, p_2\}$ and that the matrices $U := (u_{11}, \ldots , u_{n1})'$, $V := (v_{11}, \ldots , v_{n1})'$ have full rank. Then, treating the inverses in Theorem \ref{theo:coordinate_version} as Moore-Penrose generalized inverses, the two-dimensional principal components of the $i$th observation are,
\begin{align*}
    Z_i := A' (X_i - \bar{X}) B,
\end{align*}
where $A, B$ contain, respectively, any first $d_1$ and $d_2$ eigenvectors of the matrices
\begin{align*}
    \frac{1}{n}\sum_{i=1}^n X_i X_i' - \bar{X} \bar{X}' \quad \mbox{and} \quad \frac{1}{n}\sum_{i=1}^n X_i' X_i - \bar{X}' \bar{X}.
\end{align*}
\end{theorem}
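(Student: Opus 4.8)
The plan is to substitute the linear kernels into the objects of Theorem \ref{theo:coordinate_version} and to watch every kernel evaluation collapse into an ordinary matrix product, after which the generalized inverses produce exact orthogonal projections that telescope away. First I would record the elementary identities: with $\kappa_1(x,y)=x'y$ we have $k_1(x)=Ux$ and kernel matrix $K_1 = UU'$, and symmetrically $k_2(y)=Vy$, $K_2 = VV'$. Substituting these into the definition of $F_i$ and using the rank-$r$ SVD $X_i = \sum_{j=1}^r \sigma_{ij} u_{ij} v_{ij}'$ collapses the sum into $F_i = U X_i V'$, so that $\bar F = U \bar X V'$ and $F_i - \bar F = U(X_i - \bar X)V'$.

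The crucial structural fact is that, since $U$ and $V$ have full column rank (guaranteed by the full-rank hypothesis together with $n \ge \max\{p_1,p_2\}$), the Moore--Penrose inverses satisfy $U'(UU')^\dagger U = I_{p_1}$ and $V'(VV')^\dagger V = I_{p_2}$; this follows from the thin SVD of $U$ and $V$, or from the identity $(UU')^\dagger = U(U'U)^{-2}U'$. Applying the $V$-version makes the middle factor of $P_1$ telescope, $F_i K_2^{-1} F_i' = U X_i \{V'(VV')^\dagger V\} X_i' U' = U X_i X_i' U'$, and likewise for the $\bar F$-term, so the bracketed matrix in $P_1$ equals $U M_1 U'$ with $M_1 := (1/n)\sum_i X_i X_i' - \bar X \bar X'$ the left (2D)$^2$PCA matrix. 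I would then conjugate by $K_1^{-1/2}$: writing the thin SVD $U = P_U \Sigma_U Q_U'$ so that $K_1^{-1/2} = P_U \Sigma_U^{-1} P_U'$, one checks $K_1^{-1/2}U = R$ where $R := P_U Q_U'$ has orthonormal columns ($R'R = I_{p_1}$), whence $P_1 = R M_1 R'$.

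Because $R'R = I_{p_1}$, the nonzero eigenpairs of $P_1$ are exactly $(\lambda_j, R w_j)$ where $(\lambda_j, w_j)$ are the eigenpairs of $M_1$; the eigenvalue ordering and the orthonormality of eigenvectors are preserved, so a $j$th unit eigenvector of $P_1$ may be taken to be $a_j = R w_j$ with $w_j$ a $j$th eigenvector of $M_1$. The symmetric argument gives $b_k = R_V \tilde w_k$, with $R_V := P_V Q_V'$ and $\tilde w_k$ a $k$th eigenvector of $M_2 := (1/n)\sum_i X_i' X_i - \bar X' \bar X$. Substituting these into the coordinate formula \eqref{eq:coordinate_version}, the left factor simplifies via $a_j' K_1^{-1/2} = w_j' Q_U \Sigma_U^{-1} P_U'$ together with $\Sigma_U^{-1}P_U' U = Q_U'$, yielding $a_j' K_1^{-1/2}(F_i - \bar F) = w_j'(X_i - \bar X)V'$; the mirror-image simplification on the right, using $V' K_2^{-1/2} b_k = \tilde w_k$, then produces exactly $z_{i,jk} = w_j'(X_i - \bar X)\tilde w_k$, which is the $(j,k)$ entry of $A'(X_i - \bar X)B$, as claimed.

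I expect the main obstacle to be the careful bookkeeping of the generalized inverses: unlike the invertible regime implicit in Theorem \ref{theo:coordinate_version}, here $K_1$ and $K_2$ are genuinely singular (ranks $p_1, p_2 < n$), so the pseudoinverse identities $U'(UU')^\dagger U = I_{p_1}$, $V'(VV')^\dagger V = I_{p_2}$ and the correct interpretation of the symmetric roots $K_1^{-1/2}, K_2^{-1/2}$ as pseudoinverse square roots must be verified rather than assumed. It is precisely the full-column-rank hypothesis on $U$ and $V$ that forces these projections to reduce to identities, and once that is established the remainder is routine matrix algebra.
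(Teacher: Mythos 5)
Your proposal is correct and follows essentially the same route as the paper's proof: collapsing $F_i = U X_i V'$, invoking the pseudoinverse identities $V'(VV')^\dagger V = I_{p_2}$ and $K_1^{\dagger/2}U = P_U Q_U'$ (the paper's $RT'$), lifting the eigenvectors of $M_1$ through the orthonormal-column factor, and telescoping in the coordinate formula. Your write-up is somewhat more explicit than the paper's (spelling out the thin-SVD verifications and the final simplification on both sides), but the key identities and the overall structure are identical.
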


Finally, we still briefly remark on the computational complexity of MNPCA, with comparison to linear (2D)$^2${PCA}. For simplicity, we focus only on the required number of matrix multiplication and eigendecomposition operations. In (2D)$^2${PCA}, computing $(1/n) \sum_{i=1}^n X_i X_i' - \bar{X} \bar{X}'$ requires $n + 1$ multiplications of $p_1 \times p_2$ and $p_2 \times p_1$ matrices, giving the complexity $\mathcal{O}(n p_1^2 p_2)$. Assuming that the latent dimensions $d_1, d_2$ are negligible in size compared to the parameters $n, p_1, p_2$, the extraction of the first $d_1$ eigenvector-eigenvalue pairs of a $p_1 \times p_1$ matrix is an $\mathcal{O}(p_1^2)$-operation. Finally, taking into account also the right-hand side of the model, the computation of the latent matrices in (2D)$^2${PCA} has the total complexity of $\mathcal{O}(n p_1^2 p_2 + n p_1 p_2^2)$. For MNPCA, as detailed in Theorem \ref{theo:coordinate_version}, the computation of the matrix \eqref{eq:coordinate_matrix_1} has $\mathcal{O}(n^4)$ complexity. This is the most expensive operation involved, meaning it is also the complexity of the full procedure. Recall that we assumed earlier that only the first singular space is used to estimate $\mathcal{H}_{1}$ and $\mathcal{H}_{2}$, i.e., $m = 1$. In the case of general $m$, it is simple to check that the complexity of MNPCA is $\mathcal{O}(m^3 n^4)$, verifying that $m$ indeed has a major impact on the complexity.

We thus conclude that, as expected from a kernel method, MNPCA is, in general, slower than its linear counterpart, but that the difference vanishes when the dimensions $p_1, p_2$ grow to become comparable with $n$ in magnitude. Note, also, that the previous computations ignore the complexity involved in computing the kernel functions $k_1, k_2$,  

\section{Tuning parameter selection}\label{sec:tuning}

The tuning parameters of MNPCA include the number $m$ of singular spaces used to approximate the spaces $\mathcal{H}_1$ and $\mathcal{H}_2$, the rank $r$ involved in computing the matrices $F_i$ in Theorem~\ref{theo:coordinate_version}, the latent dimensionalities $(d_1, d_2)$ and any additional tuning parameters involved with the kernels $\kappa_1, \kappa_2$. We next give suggestions on how to choose these in practice. 

The number $m$ of singular spaces differs from the other tuning parameters in the sense that increasing $m$ is always better from the viewpoint of estimation accuracy (barring any possible numerical instability), enabling better coverage of the RKHS. However, as discussed in the previous section, the computational burden of MNPCA increases in the third power of $m$ and, hence, our suggestion is to choose as large value of $m$ as is possible within the given computational limits.

The rank $r$ can be seen to control a bias-variance trade-off on the level of individual observations. That is, too small values of $r$ risk discarding some defining features of the observations $X_i$ whereas large $r$ might bring with it noisy singular spaces, distracting from efficient estimation. In an exploratory context, we suggest experimenting with several small values of $r$, say 1--5, whereas, when using MNPCA as a preprocessing step for another method with measurable performance, cross-validation can also be used. 

As in most forms of PCA, the selection of $d_1$ (and, equivalently $d_2$) can be based on a scree plot of the eigenvalues of the matrix $P_1$ ($P_2$) in Theorem \ref{theo:coordinate_version}. The large dimensionality $n$ of the matrix means that standard cut-offs such as $80\%$ explained variance might not be useful due to the long tail of small but non-zero noise eigenvalues. Instead, we suggest using the following heuristic: retain all principal components whose eigenvalues $\lambda_i$ exceed $\bar{\lambda} + 2 \cdot \mathrm{sd}(\lambda_i)$, separately for the two sides of the model. 

Finally, to choose the tuning parameters of the kernels $\kappa_1, \kappa_2$, an obvious choice is to use cross-validation. Alternatively, in absence of any performance criterion, we suggest using a suitable default value. For example, for the even/odd kernel induced by the Gaussian kernel $(x, y) \mapsto \exp \{ \| x - y \|^2/(2\sigma^2) \}$, a natural choice is to use $\sigma^2 = \| G \|/n$ where the matrix $G$ has the inner product $u_{i1}'u_{j1}$ as its $(i, j)$th element. This choice makes $\sigma^2$ comparable in magnitude to the average value of $\| x - y \|^2$ and is additionally invariant to any sign-changes to individual singular vectors $u_{i1}$. This value will be used as a ``baseline'' in our later data examples.

\section{Data examples}\label{sec:examples}

The \texttt{R}-codes for running MNPCA are available on the author's web page, \url{https://users.utu.fi/jomivi/software}.

\subsection{Simulation}

We next evaluate the performance of MNPCA using simulated image data. As competitors, we take (2D)$^2${PCA} (a linear baseline) and its non-linear extension as proposed by Kong et al. in \cite{kong2005generalized}.
Given $x \in [-\pi, \pi]$ and $\alpha \in (-1, 1)$, we let $u(x; \alpha)$ denote the 10-dimensional vector whose $j$th element equals $\cos \{ (1 - \alpha) ( x -\pi + \frac{j - 1}{10} 2 \pi) \}$. We then fix $\alpha \in (-1, 1)$ and generate images representing two groups as follows: for images from Group 1, we first randomly generate $\theta_1, \theta_2, \theta_3, \theta_4$ i.i.d. from $\mathrm{Unif}(-\pi, \pi)$. A $10 \times 10$ image is then constructed as $u(\theta_1; \alpha)u(\theta_2; \alpha)' + u(\theta_3; \alpha)u(\theta_4; \alpha)'$. An image from Group~2 is generated with identical steps but by using $-\alpha$ in place of $\alpha$, meaning that the parameter $\alpha$ controls the distance between the two groups; larger $\alpha$ corresponds to better separated groups. In this study, we consider a total of three values $\alpha = 0.125, 0.100, 0.075$. Samples of images from the two groups generated with $\alpha = 0.125$ are shown in Figure \ref{fig:example_figure_1}. The two groups (top two vs. bottom two rows in Figure \ref{fig:example_figure_1}) are not that easy to discern visually but, as we will later see, the two groups are actually perfectly separable along a non-linear direction.

\begin{figure}
    \centering
    \includegraphics[width=1.000\textwidth]{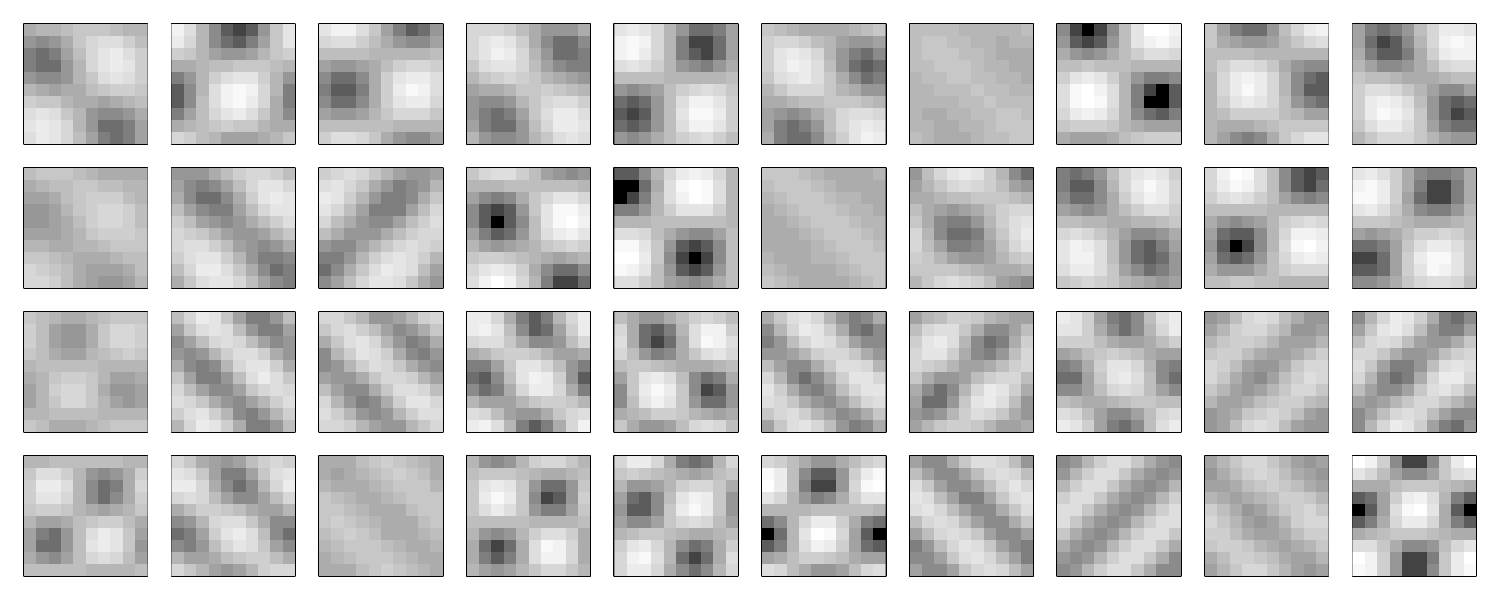}
    \caption{A sample of 20 images from Group 1 (top two rows) and 20 images from Group 2 (bottom two rows). The members of Group 2 can be seen to exhibit a denser checkerboard pattern compared to Group 1.}
    \label{fig:example_figure_1}
\end{figure}

We consider two different sample sizes $n = 50, 100$, generating in both cases 50\% of the observations from each group. In every replicate of the simulation, we use each of the methods to estimate a total of 4 components ($2 \times 2$ latent matrices), fit a QDA classifier to them and, finally, use the trained classifier to predict the classes of a separate test image set of size $n_0 = 50$. We use the Gaussian kernel for all non-linear methods and distinguish two versions of MNPCA, even and odd, giving us a total of four methods to compare. For simplicity and to alleviate computational burden we used $m = 1$ singular spaces to estimate the RKHS and the truncated rank $r = 2$.


The resulting average classification accuracies in the test set over 500 replicates of the simulation are shown in Figure \ref{fig:example_figure_2}. The horizontal axes of the panels correspond to the value of the tuning parameter $\sigma^2$ and are relative in the sense that the tick mark $a$ denotes the value $\sigma^2 = 2^a \sigma^2_0$ where $\sigma^2_0$ is a ``default'' value estimated from the data (once). For MNPCA (the lines denoted by ``Even'' and ``Odd''), we used the default value proposed in Section \ref{sec:tuning} and for the method by \cite{kong2005generalized} (the line ``Kong et al.'') we use the value $\sigma_0^2 = (1/n^2) \sum_{i,j = 1}^n \| x_i - x_j \|^2$ where $x_1, \ldots, x_n$ are the vectorized images. Note that (2D)$^2${PCA} does not use tuning parameters, meaning that its line in Figure \ref{fig:example_figure_2} is perfectly horizontal.

\begin{figure}
    \centering
    \includegraphics[width=1.000\textwidth]{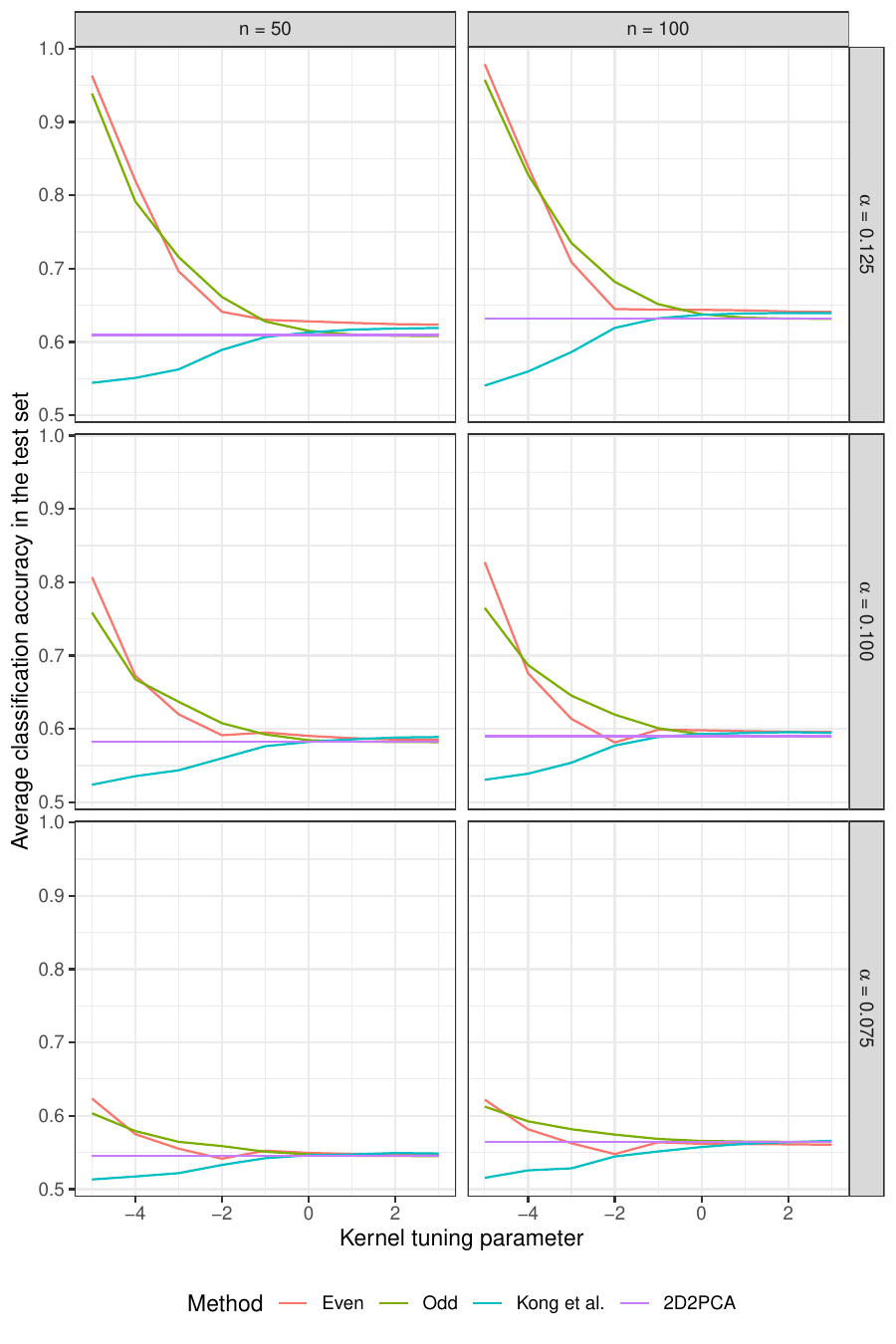}
    \caption{The results of the simulation study. Each line traces the average classification accuracy of the corresponding method as a function of the tuning parameter $\sigma^2$. The panels correspond to the choice of sample size (columns) and the $\alpha$-parameter controlling the difficulty of the separation task (smaller $\alpha$ equals more difficult task).}
    \label{fig:example_figure_2}
\end{figure}

We make the following observations from Figure \ref{fig:example_figure_2}: (i) (2D)$^2${PCA}, while improving over a random guess prediction, fails to reach a satisfactory level of accuracy even in the easiest scenario with $\alpha = 0.125$. This is because the separating direction between the two groups is highly non-linear and, in particular, no single pixel (or even the average behavior of a row or column) can be used to identify whether an image belongs to Group 1 or 2 because of the oscillating mechanism we used to generate the data. (ii) With larger values of $\sigma^2$, Kong et al. manage to improve over (2D)$^2${PCA}. but only by a very small margin. In fact, with an improper choice of the tuning parameter, their accuracy drops well below that of (2D)$^2${PCA}. (iii) Both the even and odd version of MNPCA manage to find the direction (corresponding to small values of $\sigma^2$) that perfectly separates the groups, yielding significantly improved performance over the other two methods. Actually, even though the choice of the tuning parameter $\sigma^2$ greatly affects its performance, MNPCA is still, even for sub-optimally selected $\sigma^2$, roughly as efficient as the competitors at their best. Finally, we also note that there is very little difference between the choice of odd or even Gaussian kernel.



\subsection{Real data example}

We next apply MNPCA to the FashionMNIST data, containing gray scale $28 \times 28$ images of clothing objects and available at Kaggle\footnote{\url{https://www.kaggle.com/datasets/zalando-research/fashionmnist}}. We consider only the 10000 images designated as a ``test set'' and restrict our attention there to the 2000 images of classes 5 and 9, sandals and ankle boots. Figure \ref{fig:mnist_figure_1} illustrates a selection of 40 random images from these two classes. Our objective is the same as in the simulation study, to extract a small amount of components from a training data and use these to fit a QDA classifier for predicting the labels in a separate test data set. We consider two sample sizes $n = 50, 100$ for the training data, taking $n_0 = 50$ test images in both cases. We perform a total of 100 repetitions of the study for both sample sizes, always drawing the training and test sets randomly from the full data set of 2000 images. For the tuning parameters we use the same specifications as in the simulation study.

\begin{figure}
    \centering
    \includegraphics[width=1.000\textwidth]{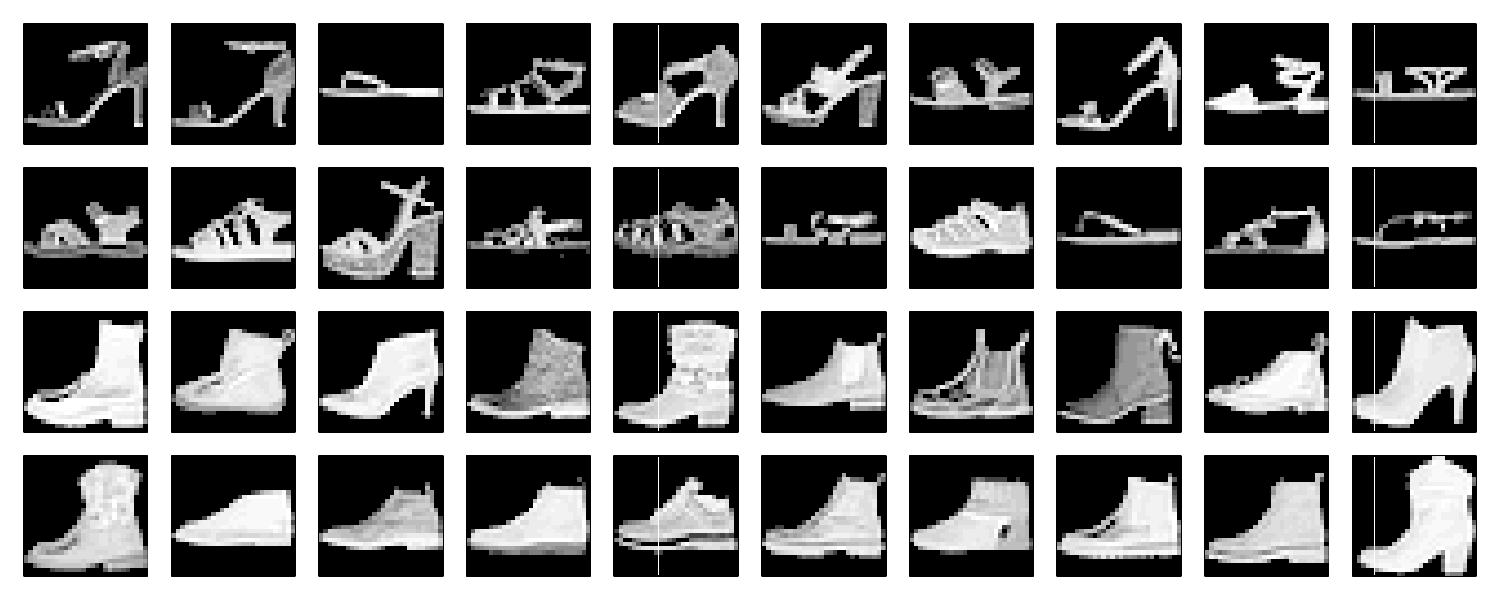}
    \caption{A sample of 20 images from class 5 (sandals, top two rows) and 20 images from class 9 (ankle boots, bottom two rows) in the FashionMNIST data set.}
    \label{fig:mnist_figure_1}
\end{figure}

Due to clear visual differences between the two groups in Figure \ref{fig:mnist_figure_1}, the separating direction is likely to be, if not linear, then at least closely approximable by a linear direction. As such, the differences between the four methods are not expected to be as drastic as in the earlier simulation. Indeed, the results illustrated in Figure \ref{fig:example_figure_2} reveal that this is what happens: (2D)$^2${PCA} is not the best method but it comes very close to the others in terms of classification accuracy. For $n = 50$, the method by Kong et al. achieves, by a small margin, the best performance, whereas, when the sample size is doubled, MNPCA surpasses Kong et al., regardless of the type of the kernel. We also observe that, especially when $n = 100$ and using the odd Gaussian kernel, MNPCA can be seen as a very ``safe'' choice, offering a reliable performance regardless of the value of the tuning parameter $\sigma^2$.

\begin{figure}
    \centering
    \includegraphics[width=1.000\textwidth]{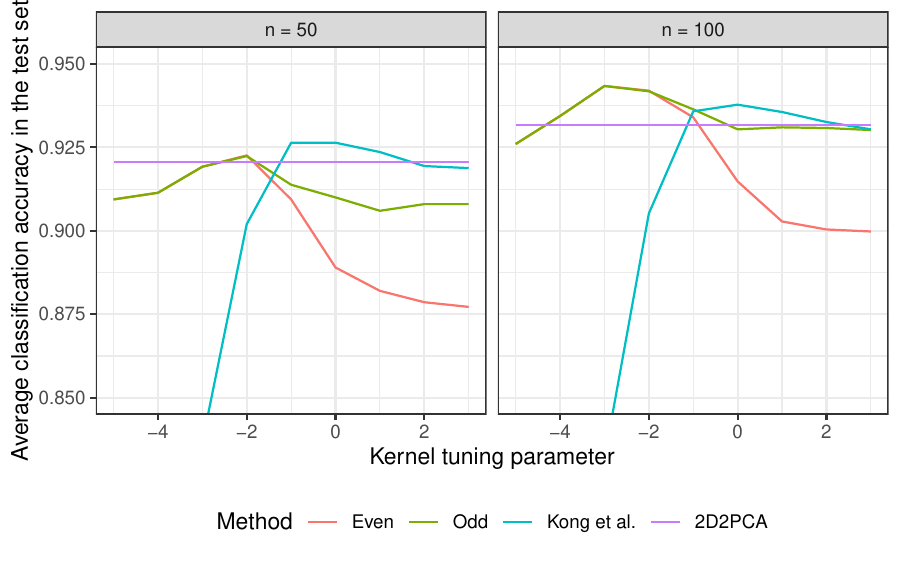}
    \caption{The results of the FashionMNIST data example. See the caption of Figure \ref{fig:example_figure_2} for the interpretation of the plot elements.}
    \label{fig:mnist_figure_2}
\end{figure}

\section{Discussion}\label{sec:discussion}

The work proposed here offers multiple opportunities for future study, which we detail next. Firstly, since its proposal in \cite{zhang20052d}, the linear (2D)$^2${PCA}-paradigm has later been extended to other settings besides PCA, for example, to supervised dimension reduction \cite{li2010dimension} and independent component analysis \cite{virta2017independent}. This naturally begs the question whether the two-sided non-linearization applied in the current work can be extended to these scenarios.

Secondly, in our data examples there was not a major qualitative difference between the odd and even Gaussian kernel. However, this might be context-dependent and it would be interesting to theoretically compare these two classes of kernels. Despite their similarity here, it could be that their behavior differs from one another in some fundamental way.

Thirdly, besides matrix-valued data, also tensor data is currently routinely produced by applications such as medical imaging. As such, extending our non-linearization approach from two-sided to multi-sided would allow for the development of non-linear tensorial dimension reduction methodology. Such an extension is not straightforward as our work here hinges crucially on the use of the singular value decomposition, which is known not to have a direct analogy in the case of tensors \cite{de2000multilinear}.

\section*{Acknowledgments}
The work of JV was supported by the Academy of Finland (grants no. 335077, 347501, 353769).

{\appendix

\section{Proofs of theoretical results}

\begin{proof}[Proof of Theorem \ref{theo:hs_convergence_h1}]

    We begin by establishing that the sample mean operator $\bar{U}_n$ is root-$n$ consistent. Denoting $V_i := U_i - \mathrm{E}(U)$, we have $\mathrm{E}(V_i) = 0$ (the zero operator), for each $i = 1, \ldots , n$. Consequently,
    \begin{align*}
        \mathrm{E} \| \bar{U}_n - \mathrm{E}(U) \|_{21}^2 =& \mathrm{E} \left\langle \frac{1}{n} \sum_{j=1}^n V_j, \frac{1}{n} \sum_{i=1}^n V_i \right\rangle_{21} \\
        =& \frac{1}{n^2} \sum_{i=1}^n \sum_{j=1}^n \mathrm{E} \left\langle V_i, V_j \right\rangle_{21}\\
        =& \frac{1}{n^2} \sum_{i=1}^n \mathrm{E} \| V_i \|_{21}^2\\
        =& \frac{1}{n} \mathrm{E} \| V_1 \|_{21}^2,
    \end{align*}
    where the third equality follows as, for $i \neq j$, we have,
    \begin{align*}
        \mathrm{E} \left\langle V_i, V_j \right\rangle_{21} = \mathrm{E} \{ \mathrm{E} ( \left\langle V_i, V_j \right\rangle_{21} \mid V_j ) \} = \mathrm{E} \left\langle 0, V_j \right\rangle_{21} = 0.
    \end{align*}
    Now, by \eqref{eq:operator_norm_bound}, we have
    \begin{align*}
        \mathrm{E} \| V_1 \|_{21}^2 =& \mathrm{E} \| U_1 \|_{21}^2 - \| \mathrm{E} ( U ) \|_{21}^2\\
        \leq& \mathrm{E} \| U_1 \|_{21}^2\\
        \leq& C_1 C_2 d^2 \mathrm{E} \| X \|_2^2,
    \end{align*}
    where the final upper bound, denoted by $h$ hereafter, is finite under Assumption~\ref{assu:fourth_moment}. Consequently, $ \mathrm{E} \| \bar{U}_n - \mathrm{E}(U) \|_{21}^2 = \mathcal{O}(1/n) $ and Markov's inequality gives, for a fixed $\varepsilon > 0$,
    \begin{align*}
        & \mathrm{P}( \sqrt{n} \| \bar{U}_n - \mathrm{E}(U) \|_{21} \geq \sqrt{h/\varepsilon})\\
        \leq& (\varepsilon/h) n \mathrm{E} \| \bar{U}_n - \mathrm{E}(U) \|_{21}^2\\
        \leq& \varepsilon.
    \end{align*}
    Consequently, $ \| \bar{U}_n - \mathrm{E}(U) \|_{21} = \mathcal{O}_p(1/\sqrt{n})$, as desired.

    Next, an exactly analogous computation shows that, under the given assumptions, we also have $ \| B_n - \mathrm{E}(UU^*) \|_{11} = \mathcal{O}_p(1/\sqrt{n})$ where $B_n := (1/n) \sum_{i = 1}^n U_i U_i'$.

    The consistency of $\bar{U}_n$ further shows that also the operator $\bar{U}_n \bar{U}_n^*$ is consistent:
    \begin{align*}
    & \| \bar{U}_n \bar{U}_n^* - \mathrm{E}(U) \mathrm{E}(U)^* \|_{11} \\
    =& \| \bar{U}_n \{ \bar{U}_n - \mathrm{E}(U) \}^* + \{ \bar{U}_n - \mathrm{E}(U) \} \mathrm{E}(U)^* \|_{11} \\
    \leq& \| \bar{U}_n \{ \bar{U}_n - \mathrm{E}(U) \}^* \|_{11} + \| \{ \bar{U}_n - \mathrm{E}(U) \} \mathrm{E}(U)^* \|_{11} \\
    \leq& \| \bar{U}_n \|_{\mathrm{OP}} \| \bar{U}_n - \mathrm{E}(U) \|_{21} + \| \mathrm{E}(U) \|_{\mathrm{OP}} \| \bar{U}_n - \mathrm{E}(U) \|_{21} \\
    \leq& \| \bar{U}_n \|_{21} \| \bar{U}_n - \mathrm{E}(U) \|_{21} + \| \mathrm{E}(U) \|_{21} \| \bar{U}_n - \mathrm{E}(U) \|_{21} \\
    =& \mathcal{O}_p(1/\sqrt{n}),
    \end{align*}
    where we have used  $\| \mathrm{E}(U) \|_{21} < \infty$ and the following:
\begin{align*}
    \| \bar{U}_n \|_{21} \leq \| \bar{U}_n - \mathrm{E}(U) \|_{21} + \| \mathrm{E}(U) \|_{21} = \mathcal{O}_p(1).
\end{align*}
    The consistency of $H_{n1}$ then follows from the triangle inequality.

\end{proof}

\begin{proof}[Proof of Theorem \ref{theo:coordinate_version}]

Given a linear operator $A$ from one $n$-dimensional Hilbert space to another, we define its coordinate $[A]$ to be the unique $n \times n$ matrix for which $[Af] = [A] [f]$ for all $f \in \mathcal{H}_{n1}$, see, e.g., \cite{lee2013general} for a similar construct. Note that, to avoid notational overload, our ``coordinate notation'' $[\cdot]$ leaves it implicit which basis each instance of the notation refers to (however, this is always obvious from the arguments). A straightforward exercise also reveals that $[AB] = [A][B]$ for any two compatible linear operators $A, B$. Let then the linear operator $U_i \in \mathcal{B}(\mathcal{H}_{n2}, \mathcal{H}_{n1})$ be the $i$th sample counterpart of the random operator $U(X)$ defined in \eqref{eq:reduced_variable}. That is, $U_i$ satisfies,
\begin{align*}
    \langle f, U_i g \rangle_{n1} = \sum_{j=1}^r \sigma_{ij} f(u_{ij}) g(v_{ij}),
\end{align*}
for all $f \in \mathcal{H}_{n1}$ and $g \in \mathcal{H}_{n2}$. Applying our earlier formulas \eqref{eq:sample_evaluation_property} and \eqref{eq:sample_inner_product} then gives that
\begin{align*}
    [f]' K_1 [U_i] [g] = [f]' \left\{ \sum_{j=1}^r \sigma_{ij} k_1( u_{ij} ) k_2( v_{ij} )'  \right\} [g] = [f]' F_i [g],
\end{align*}
valid for all $[f] \in \mathbb{R}^{n}$, $[g] \in \mathbb{R}^{n}$, where $F_i := \sum_{j=1}^r \sigma_{ij} k_1(u_{ij}) k_2(v_{ij})'$. Consequently, we obtain the coordinate
\begin{align*}
    [U_i] = K_1^{-1} F_i,
\end{align*}
using which we obtain the sample counterpart $\bar{U}$ of $\mathrm{E}\{ U(X) \}$ to have the coordinate
\begin{align*}
    [\bar{U}] = \left[ \frac{1}{n} \sum_{i=1}^n U_i \right] = \frac{1}{n} \sum_{i=1}^n [U_i] = K_1^{-1} \bar{F},
\end{align*}
where $\bar{F} := (1/n) \sum_{i=1}^n F_i$. Now, by definition, we have $\langle f, U_i g \rangle_{n1} = \langle U_i^* f, g \rangle_{n2}$ for all $f \in \mathcal{H}_{n1}$ and $g \in \mathcal{H}_{n2}$. Applying \eqref{eq:sample_inner_product} thus reveals that the coordinate of the adjoint of $U_i$ is $[U_i^*] = K_2^{-1} [U_i]' K_1 = K_2^{-1} F_i'$. This in turn shows that the coordinate of the sample counterpart $H_{n1}$ of the (2D)$^2${PCA}-operator $H_1(X)$ in \eqref{eq:operator_h1x} is
\begin{align*}
    [H_{n1}] = K_1^{-1} \frac{1}{n} \sum_{i=1}^n F_i K_2^{-1} F_i' - K_1^{-1} \bar{F} K_2^{-1} \bar{F}'.
\end{align*}
The set of first $d_1$ mutually orthogonal unit-length eigenfunctions $f_1, \ldots , f_{d_1}$ of the operator $H_{n1}$ are thus found as (any) maximizers of the objective function $f \mapsto \langle f, H_{n1} f \rangle_{n1}$ under the constraints that $\langle f_j, f_k \rangle_{n1} = \delta_{jk}$ for all $j = 1, \ldots , d_1$ and $k = 1, \ldots j$, where $\delta_{jk}$ is Kronecker's delta. As $\langle f_j, f_k \rangle_{n1} = [f_j]' K_1 [f_k]$, the vectors $a_j := K_1^{1/2} [f_j]$ thus form an orthonormal set of first $d_1$ eigenvectors of the matrix
\begin{align*}
    K_1^{-1/2} \left( \frac{1}{n} \sum_{i=1}^n F_i K_2^{-1} F_i' - \bar{F} K_2^{-1} \bar{F}' \right)  K_1^{-1/2}.
\end{align*}
This allows us to obtain the coordinates of the desired eigenvectors as $[f_j] = K_1^{-1/2} a_j$ and an analogous derivation for the right-hand side gives us the coordinates $[g_1], \ldots , [g_{d_2}]$ of the right-hand side eigenvectors. The proof is now concluded after observing that the two-sided projection of $U_i - \bar{U}$ to a pair $(f, g)$ is given by $\langle f, (U_i - \bar{U}) g \rangle_{n1} = [f]' (F_i - \bar{F}) [g] $.

\end{proof}

\begin{proof}[Proof of Theorem \ref{theo:linear_kernel}]
    We consider only the left-hand side of the model, the right one behaving analogously. Under linear kernels, we have $F_i = U X_i V'$ (further multiplied by an additional constant if the kernels are scaled). As $U, V$ have full rank, we further have $V' (VV')^\dagger V = I_{p_2}$ where $(VV')^\dagger$ is the Moore-Penrose generalized inverse of the matrix $VV'$, and $ (UU')^{\dagger/2} U = RT'$ where $R \in \mathbb{R}^{n \times p_1}$ and $T \in \mathbb{R}^{p_1 \times p_1}$ contain, respectively, any first $p_1$ left and right singular vectors of $U$. Consequently, the matrix \eqref{eq:coordinate_matrix_1} in Theorem~\ref{theo:coordinate_version} takes the form,
    \begin{align*}
        B_1 := RT' \left( \frac{1}{n}\sum_{i=1}^n X_i X_i' - \bar{X}  \bar{X}' \right) TR'.
    \end{align*}
    As $RT'$ has orthonormal columns, first $d_1$ eigenvectors of $B_1$ are obtained as $RT'A$ where the $p_1 \times d_1$ matrix $A$ contains any orthonormal set of first $d_1$ eigenvectors of the matrix $(1/n) \sum_{i=1}^n X_i X_i' - \bar{X}  \bar{X}'$. Accordingly, when computing the final matrix $Z_i$ of principal components, the centered observation $X_i - \bar{X}$ gets subjected from the left to the linear transformation by the matrix,
    \begin{align*}
        (RT'A)' (UU')^{\dagger/2} U = A' T R' R T' = A',
    \end{align*}
    as claimed, concluding the proof.
\end{proof}

 }

\bibliographystyle{ieeetr}
\bibliography{references}

\end{document}